\documentclass[reqno, 12pt]{amsart}

\usepackage{amsmath}    
\usepackage{amsthm}     
\usepackage{graphicx}   
\usepackage{verbatim}   
\usepackage{color}      
\usepackage{subfigure}  
\usepackage{hyperref}   
\usepackage[utf8]{inputenc}

\newtheorem{theorem}{Theorem}[section]

\newtheorem{lemma}[theorem]{Lemma}
\newtheorem{corollary}[theorem]{Corollary}
\theoremstyle{definition}
\newtheorem{definition}[theorem]{Definition}


\theoremstyle{remark}

\newcommand{\R}{\mathbb{R}}

\title{Superbridge and Bridge Indices for Knots}
\author[Adams et al]{Colin Adams, Nikhil Agarwal, Rachel Allen, Tirasan Khandhawit, Alex Simons, Rebecca Winarski, and Mary Wootters}
\date{\today}
\begin{document}

\setlength{\parindent}{0pt}
\setlength{\parskip}{8pt}

\begin{abstract}
We improve the upper bound on superbridge index $sb[K]$ in terms of bridge index $b[K]$ from $sb[K] \leq 5b -3$ to $sb[K]\leq 3b[k] - 1$. 
\end{abstract}
\maketitle

\section{Introduction}

In a seminal paper \cite{Kuip}, N. Kuiper introduced superbridge index for knots,  a variation of the better-known bridge index, first introduced by Schubert in \cite{Sch}. Let $K$ be a particular embedding of a knot in 3-space, which we will refer to as a conformation, and let $[K]$ denote the set of all conformations that are equivalent to it, generating the same knot type. Letting $\vec{v}$
represent a unit vector giving a direction in 3-space to which we will project the knot, we can define bridge index as
follows.

 \begin{definition} The {\bf bridge index} of a knot $[K]$ is given by 
      
     $$b[K] = \underset{K \in [K]}{\text{min}}\: \underset{\vec{v} \in S^2}{\text{min}} (\text{\# of local maxima of $K$ in direction $\vec{v}$})$$.  \end{definition}
  
Given this formulation of bridge index, it is simple to give Kuiper's variant:
     
     \begin{definition} The {\bf superbridge index} of a knot is given by
     
     $$sb[K] = \underset{K \in [K]}{\text{min}}\: \underset{\vec{v} \in S^2}{\text{max}} (\text{\# of local maxima of $K$ in direction $\vec{v}$})$$.
     
     \end{definition}

It is obvious from the definition that $sb[K] \geq b[K]$. In fact, in \cite{Kuip}, Kuiper proved
that $sb[K] > b[K]$ for any nontrivial knot. Superbridge index is related to several
other invariants.

\begin{definition} The {\bf geometric degree of a knot conformation $K$} is the greatest
number of times that a plane intersects the knot conformation, denoted d(K).
The {\bf geometric degree of a knot type [K]} is given by 

$$d[K] = \underset{K \in [K]}{\text{min}} d(K)$$ 
\end{definition} 

Note that $d[K]$ is always even since if a plane is tangent to an embedding of the knot $K$, we can move
the plane slightly to obtain one fewer intersections, and otherwise, intersections
pair up according to how they are connected by the knot to one side of the plane.
Each such pair creates at least one local maximum in the normal direction to the
plane. This also demonstrates the following useful result.

\begin{lemma}
\label{degreesb} $d[K] \leq 2sb[K]$.
\end{lemma}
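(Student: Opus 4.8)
The plan is to bound the geometric degree of a well-chosen conformation by twice its superbridge number, and then pass to the relevant minima. Concretely, I would choose a conformation $K_0 \in [K]$ realizing the superbridge index, so that $\max_{\vec{v} \in S^2}(\text{\# of local maxima of } K_0 \text{ in direction } \vec{v}) = sb[K]$. Since $d[K] = \min_{K \in [K]} d(K) \leq d(K_0)$, it then suffices to prove the single-conformation estimate $d(K_0) \leq 2\, sb[K]$.

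To estimate $d(K_0)$, fix a plane $P$ and let $\vec{v}$ be a unit normal to $P$. I would identify the intersections of $P$ with $K_0$ with the level-set crossings of the height function $h_{\vec{v}}$ obtained by projecting $K_0$ onto the $\vec{v}$-axis: for a transverse plane at height $t$, the number of intersection points equals the number of times $h_{\vec{v}}$ crosses the value $t$ as one traverses the closed curve. The crux is then a Morse-theoretic counting argument. Traversing $K_0$ once, $h_{\vec{v}}$ returns to its starting value, so along $S^1$ its critical points alternate and its numbers of local maxima and local minima coincide; call this common number $m(\vec{v})$. Starting below all critical values the crossing count is $0$, it increases by $2$ each time the sweeping level passes a local minimum, and decreases by $2$ each time it passes a local maximum. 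Hence at height $t$ the crossing count equals $2\big(\#\{\text{minima below } t\} - \#\{\text{maxima below } t\}\big) \leq 2\, m(\vec{v})$, so every transverse plane with normal $\vec{v}$ meets $K_0$ in at most $2\, m(\vec{v})$ points. Taking the maximum over all directions gives $d(K_0) \leq 2\max_{\vec{v}} m(\vec{v}) = 2\, sb[K]$, and combined with $d[K] \leq d(K_0)$ this yields the claim.

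The main obstacle I expect is the bookkeeping around degenerate directions and tangent planes: I must ensure $\vec{v}$ is chosen so that $h_{\vec{v}}$ is Morse, with finitely many nondegenerate critical points, and that the maximal intersection count in the definition of $d(K_0)$ is attained, or at least approached, by transverse planes rather than tangential ones. The remark preceding the lemma — that a tangency can be removed by a small translation of the plane, and that $d[K]$ is therefore even — is precisely what licenses restricting attention to transverse planes, so I would invoke it to reduce to the generic case and thereby avoid a separate perturbation analysis. A secondary technical point is the attainment of the minimum defining $sb[K]$; if it is not realized exactly one passes to a minimizing sequence and the inequality survives the limit.
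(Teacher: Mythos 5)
Your argument is correct and matches the paper's: the lemma is justified there by the remark immediately preceding it, namely that transverse intersection points pair up into arcs on one side of the plane, each arc contributing a local maximum in the normal direction, so a plane meets a conformation realizing $sb[K]$ in at most $2\,sb[K]$ points. Your level-set sweeping count is just a formalization of that same pairing, and your handling of tangent planes by small perturbation is exactly the paper's reduction to the transverse case.
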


One of the reasons that superbridge index is interesting is its relationship with one of the most natural invariants for knots.

\begin{definition} The {\bf stick index} of a knot type, denoted $s[K]$,  is the least number of sticks glued end-to-end to obtain a conformation of that knot type.
\end{definition}
In \cite{Jin1}, Jin noted the following.

\begin{lemma}
\label{sticksb} $sb[K] \leq s[K]/2$.
\end{lemma}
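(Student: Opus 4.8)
The plan is to take $K$ to be a minimal-stick conformation, so that $K$ consists of $n := s[K]$ straight sticks joined end to end at $n$ vertices, and then to bound the number of local maxima in \emph{every} direction $\vec v$ by $n/2$. Since $sb[K]$ is a minimum over all conformations of the maximum over directions, exhibiting this single conformation with $\max_{\vec v}(\#\text{ local maxima}) \le n/2$ immediately yields $sb[K] \le s[K]/2$.

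First I would fix a direction $\vec v \in S^2$ and consider the height function $h(x) = \langle x, \vec v\rangle$ restricted to $K$. On each individual stick $h$ is affine, hence monotone, so there is no local maximum of $h$ in the interior of a stick unless that stick happens to be perpendicular to $\vec v$. Thus in every case the local maxima of $h$ are realized either at vertices or along entire horizontal sticks, and I count each horizontal top plateau as a single local maximum associated with the vertices at its ends.

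The key step is an alternation argument. Traversing the closed curve $K$ once, the local maxima and local minima of $h$ must alternate, so their numbers are equal; call this common value $M(\vec v)$. Moreover, the vertices realizing the maxima and those realizing the minima form disjoint subsets of the $n$ vertices of $K$, since a vertex cannot be both a strict local max and a strict local min of a nonconstant piecewise-linear height function. Hence $2M(\vec v) \le n$, i.e. $M(\vec v) \le n/2$, and this bound holds uniformly in $\vec v$. Taking the maximum over $\vec v$ and invoking the definition of superbridge index gives $sb[K] \le \max_{\vec v} M(\vec v) \le n/2 = s[K]/2$.

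The one point requiring care — and the main obstacle — is the treatment of degenerate directions, in which some sticks are horizontal or several vertices share a common height, so that a ``local maximum'' may be a connected plateau rather than an isolated vertex. I would address this by observing that a plateau still contributes exactly one maximum while consuming at least one vertex, and that the cyclic alternation of maxima and minima survives such degeneracies; consequently the inequality $2M(\vec v) \le n$ is unaffected and the conclusion stands for all $\vec v \in S^2$.
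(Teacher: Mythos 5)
Your proposal is correct and follows essentially the same route as the paper: pick a stick-minimal conformation, observe that for any direction the local maxima occur only at vertices or along entire edges, and pair each maximum with a corresponding minimum to get at most $s[K]/2$ maxima in every direction. You simply make explicit the alternation and vertex-counting details (and the plateau degeneracies) that the paper's two-line proof leaves implicit.
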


\begin{proof}
Choose a stick conformation that realizes the stick number. Then for any choice of a direction vector $\vec{v} \in S^2$, the maxima can only occur at vertices or along entire edges. Since for every maximum, there must be a corresponding minimum, the superbridge number of this conformation is at most $s[K]/2$. Therefore the superbridge index, which is the minimum over all conformations, is also bounded above by $s[K]/2$.
\end{proof}

Information about superbridge index has been very useful in determining stick index as in \cite{Jin1, Jin2, Jin3}.

In \cite{Kuip}, Kuiper determined the geometric degree of all torus knots, denoted $T_{p,q}$
with $p < q$, and then used Lemma \ref{degreesb} together with upper bounds to determine
superbridge index for all torus knots as well:

\begin{theorem}For $p < q$,  $d(T_{p,q}) = \text{min}\{4p, 2q\}$ and $sb(T_{p,q}) = \text{min}\{2p, q\}$.
\end{theorem}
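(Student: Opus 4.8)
The plan is to deduce both equalities from a single inequality via Lemma \ref{degreesb}, using the arithmetic identity $\min\{4p,2q\} = 2\min\{2p,q\}$. Since Lemma \ref{degreesb} gives $d[K] \le 2\,sb[K]$ for every knot type, a conformation realizing at most $\min\{2p,q\}$ maxima in every direction forces both $sb(T_{p,q}) \le \min\{2p,q\}$ and $d(T_{p,q}) \le 2\min\{2p,q\} = \min\{4p,2q\}$; conversely, a lower bound $d(T_{p,q}) \ge \min\{4p,2q\}$ yields $sb(T_{p,q}) \ge \tfrac12 d(T_{p,q}) \ge \min\{2p,q\}$. Thus it suffices to (I) exhibit a conformation with at most $\min\{2p,q\}$ maxima in every direction, and (II) prove that every conformation of $T_{p,q}$ is met by some plane in at least $\min\{4p,2q\}$ points.

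For (I) I would split into two regimes according to which term achieves the minimum. When $p < q \le 2p$ I would use the standard torus-of-revolution conformation $\gamma(t) = ((R+r\cos 2\pi p t)\cos 2\pi q t, (R+r\cos 2\pi p t)\sin 2\pi q t, r\sin 2\pi p t)$ with $R \gg r$, and count the local maxima of the height function $h_{\vec v} = \gamma\cdot\vec v$ for a general $\vec v = (\sin\phi\cos\theta,\sin\phi\sin\theta,\cos\phi)$. Directions near the axis are dominated by the $\sin 2\pi p t$ term and see only $p$ maxima, while near-equatorial directions are dominated by the $\cos 2\pi q t$ factor and see $q$ maxima; taking $R/r$ large keeps the modulation from introducing spurious critical points, so the worst case is $q = \min\{2p,q\}$. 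When $q \ge 2p$ the round torus is wasteful, so I would instead realize $T_{p,q}$ from a balanced $p$-bridge (plat) presentation of the braid $(\sigma_1\cdots\sigma_{p-1})^q$, positioned symmetrically so that it has exactly $p$ maxima vertically and, by monotonicity of the strands between consecutive critical levels, at most $2p$ maxima in every tilted direction, giving $sb \le 2p = \min\{2p,q\}$.

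The main obstacle is (II), and the elementary estimates fall short. Integrating the number of maxima $\mu(\vec v)$ over $S^2$ only recovers total curvature, and by Milnor's theorem the infimal total curvature of $T_{p,q}$ equals $2\pi\,b[T_{p,q}] = 2\pi p$, so direction-averaging cannot force more than roughly $p$ maxima; likewise the $p$-braid structure robustly forces only a plane through the braid axis to meet $K$ in $2p$ points. To reach the sharp constant I would argue from thin position: sweeping $\R^3$ by parallel planes orthogonal to a well-chosen direction makes $h_{\vec v}$ a Morse function whose level sets must carry enough of a genus-$\tfrac{(p-1)(q-1)}{2}$ Seifert surface that some level set has at least $\min\{4p,2q\}$ points. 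The delicate point, and the real difficulty, is that one cannot establish $d \ge 4p$ and $d \ge 2q$ separately—either would contradict the upper bound in the opposite regime—so the argument must select a single plane whose intersection count equals the minimum of the two, using the relative sizes of $p$ and $q$. Granting (I) and (II), Lemma \ref{degreesb} together with $\min\{4p,2q\}=2\min\{2p,q\}$ delivers both equalities simultaneously.
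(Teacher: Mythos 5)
First, a point of comparison: the paper does not prove this theorem at all --- it is quoted from Kuiper \cite{Kuip}, and the surrounding text only records the strategy, namely that Kuiper first determined the geometric degree $d(T_{p,q})$ and then combined Lemma \ref{degreesb} with explicit upper bounds on $sb$. Your reduction --- prove $sb(T_{p,q}) \le \min\{2p,q\}$ and $d(T_{p,q}) \ge \min\{4p,2q\}$, then squeeze both equalities out of $d \le 2\,sb$ and the identity $\min\{4p,2q\} = 2\min\{2p,q\}$ --- is exactly that strategy, so the logical skeleton is right.

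However, both substantive halves have genuine gaps. For (II) you concede the point yourself: the total-curvature average and the braid-axis plane fall short, and the thin-position/Seifert-genus sketch is not an argument --- nothing in it produces the constants $4p$ or $2q$. This lower bound is the hard content of Kuiper's theorem, and the proposal does not supply it. For (I), the regime $q \ge 2p$ is handled incorrectly: a plat presentation with $p$ maxima in the vertical direction can have arbitrarily many maxima in nearly horizontal directions; monotonicity of strands between critical levels of $z$ says nothing about tilted height functions. (Indeed, the entire point of the present paper is that controlling \emph{all} directions starting from a plat requires the careful construction along $\eta(t)$.) The correct route in that regime is the inequality $sb[K] \le 2\beta[K]$ quoted in the introduction together with $\beta(T_{p,q}) = p$, giving $sb \le 2p$ immediately. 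In the regime $q \le 2p$, the round-torus count is also unjustified in the transitional band of directions where $\sin\phi$ is comparable to $r/R$: there the derivative of the height function is a trigonometric polynomial of degree $p+q$ with no single dominant term, so the claimed bound of $q$ maxima does not follow from taking $R/r$ large, and Kuiper in fact uses a different conformation to obtain $sb \le q$.
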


In the same paper, Kuiper also
proved that $sb[K] \leq 2\beta[K]$, where $\beta [K]$ is the braid index of $[K]$. In \cite{FLS}, it was
proved that $sb[K] \leq 5b[K] - 3$. Here, we obtain the following improvement of that upper bound. 

\begin{theorem} \label{mainthm}$sb[K] \leq  3b[K] -1$. 
\end{theorem}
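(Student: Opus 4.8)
The plan is to start from a conformation that already realizes the bridge index and to reposition it into a geometric normal form on which the number of local maxima can be controlled in \emph{every} direction simultaneously. Fix a conformation $K$ together with a direction, which we take to be vertical, so that $K$ has exactly $b=b[K]$ local maxima in the vertical direction. Passing to a Morse (plat) position, $K$ consists of $b$ local maxima clustered near the top, $b$ local minima near the bottom, and $2b$ arcs that are monotone in the vertical coordinate and meet in pairs at the $2b$ extrema; the essential tangling of the knot is carried by how these arcs wind around one another in the middle region. Since $sb[K]$ is a minimum over conformations, I am free to replace $K$ by any embedding of this combinatorial type, and the goal is to produce one whose height function $h_{\vec v}=\langle\,\cdot\,,\vec v\rangle$ has at most $3b-1$ local maxima for every $\vec v\in S^2$.

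A first reduction simplifies the bookkeeping. For any closed curve the number of local maxima of $h_{\vec v}$ equals the number of its local minima, and a local minimum of $h_{\vec v}$ is exactly a local maximum of $h_{-\vec v}$; hence the number of maxima in direction $\vec v$ equals the number of maxima in direction $-\vec v$, and it suffices to bound the count over the single closed hemisphere $v_z\ge 0$. I would then convexify the extremal structure: place the $b$ maxima on a small convex cap near the north pole and the $b$ minima on a convex cap near the south pole, and realize each of the $2b$ monotone connecting arcs as a convex arc. The key local fact is that a convex arc has at most one interior local maximum and at most one interior local minimum of $h_{\vec v}$, since its restricted height function is a single sinusoid; for $\vec v$ near vertical each such arc is moreover monotone, so the only maxima are the $b$ maximum–endpoints and we recover the bridge count $b$.

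The global estimate then follows by counting critical points. Writing the knot as its cyclic alternation of $2b$ convex arcs separated by $2b$ extremal junctions, each arc contributes at most two critical points of $h_{\vec v}$ and each junction at most one, giving at most $6b$ critical points and hence at most $3b$ local maxima for every $\vec v$. To sharpen this to $3b-1$ I would combine the hemisphere reduction with a global-extremum argument: the arc carrying the global maximum of $h_{\vec v}$ cannot also donate a second interior extremum of the opposite type without forcing an adjacent junction to be a monotone pass-through, which removes a critical point from the tally. Balancing these forced pass-throughs against the arc-and-junction budget is precisely where the additive constant should be won.

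The main obstacle is honest control of the middle, tangled region. A knot of bridge number $b$ can have arbitrarily large crossing number (already the $(2,q)$ torus knots have $b=2$), so the $2b$ connecting arcs cannot literally all be made convex while still realizing the knot; naive helical winding would produce a number of oblique maxima growing with the crossing number. The heart of the argument must therefore be a construction, modeled on Kuiper's thin torus-knot conformations in which the winding is arranged \emph{the short way}, that realizes the braiding in a thin configuration whose strands remain unimodal in every direction of the working hemisphere, so that the per-arc critical-point bound survives independently of the number of crossings. Showing that such a conformation exists for an arbitrary $b$-bridge knot and simultaneously respects the cap structure used above is the step I expect to demand the most care; the exact ``$-1$'' then falls out of the parity and global-extremum bookkeeping described above.
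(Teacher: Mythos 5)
Your proposal correctly identifies the overall strategy -- start from a plat position realizing $b[K]$ and build a Kuiper-style thin conformation on which every direction's height function can be controlled -- but the step you yourself flag as ``the step I expect to demand the most care'' is in fact the entire content of the proof, and it is missing. The paper's construction is concrete: free the leftmost strand of the $n$-plat (this requires a lemma of its own), compress the rest of the plat into an $\epsilon$-neighborhood of a quarter-arc of the curve $\eta(t)=(\cos t,\sin t,\cos^2 t)$, and then attach $2n-2$ auxiliary strands joining the remaining $n-1$ maxima to minima so that the union becomes a $(2n-1)$-string closed braid lying in a thin tube around $\eta$. Kuiper's trigonometric-polynomial approximation and root count then bound the critical points of each braid string, and a separate calculus lemma shows the quarter-arc of $\eta$ has at most two critical points in any direction. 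Your ``convexify the arcs'' proposal cannot substitute for this: as you note, the $2b$ connecting arcs of a genuinely tangled $b$-bridge knot cannot all be made convex, and without the explicit curve $\eta$ and the polynomial root-counting there is no mechanism forcing unimodality of the strands in every direction simultaneously.

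The second gap is the source of the $-1$. You propose to win it by a parity/global-extremum argument on a hemisphere, but that is not where it comes from and your sketch of it (an arc carrying the global maximum ``forcing an adjacent junction to be a monotone pass-through'') is not substantiated. In the paper the $-1$ comes from the freed loose strand: because one of the $n$ maxima needs no auxiliary doubling, the braid has $2n-1$ strings rather than $2n$, and the worst case tally is $(2n-1)$ interior maxima plus $(n-1)$ singular attachment points plus $1$ from the loose strand, totalling $3n-1$. Your budget of ``$2b$ arcs times two critical points plus $2b$ junctions times one'' gives $6b$ critical points, hence $3b$ maxima, and the additional saving is asserted rather than proved. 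A further subtlety your outline does not address is the transition behavior: as $\vec v$ varies, interior maxima of strands migrate into the singular attachment points, and one must check that the count does not jump upward at these transitions; the paper devotes a paragraph to exactly this.
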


Thus, we now know bridge index sandwiches superbridge index via  $b[K] + 1 \leq sb[K] \leq 3b[K] -1$.

 Theorem \ref{mainthm} implies the superbridge index of 
any 2-bridge knot is either 3, 4 or 5.  In \cite{Milnor}, Milnor proves that
every nontrivial knot has geometric degree at least 6. Hence Lemma \ref{degreesb} yields:

\begin{corollary} Any two-bridge knot has geometric degree 6, 8 or 10. 

\end{corollary}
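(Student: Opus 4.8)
The plan is to assemble the corollary purely from results already in hand: Theorem \ref{mainthm}, Lemma \ref{degreesb}, the parity remark preceding that lemma, and Milnor's lower bound from \cite{Milnor}. The starting observation is that a two-bridge knot has $b[K] = 2$ by definition, so Theorem \ref{mainthm} immediately gives the superbridge bound $sb[K] \leq 3 \cdot 2 - 1 = 5$.

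First I would convert this into an upper bound on geometric degree. Applying Lemma \ref{degreesb}, which asserts $d[K] \leq 2\,sb[K]$, to the bound $sb[K] \leq 5$ yields $d[K] \leq 10$. For the lower bound I would invoke Milnor's theorem that every nontrivial knot has geometric degree at least $6$; since a two-bridge knot is nontrivial, this gives $d[K] \geq 6$.

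It remains only to rule out the odd values between these two bounds. Here I would appeal to the parity observation recorded just before Lemma \ref{degreesb}, namely that $d[K]$ is always even: a plane tangent to a conformation can be perturbed to remove an intersection, and away from tangencies the intersection points pair off across the plane. Evenness together with $6 \leq d[K] \leq 10$ then leaves exactly $d[K] \in \{6, 8, 10\}$. I anticipate no genuine obstacle in this deduction, since all the substantive work already resides in Theorem \ref{mainthm} and the cited bound of Milnor; the only step deserving a moment's attention is the parity argument, as it is precisely what excludes the values $7$ and $9$ and thereby sharpens the conclusion from an interval to the three admissible values.
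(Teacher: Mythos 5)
Your proposal is correct and follows exactly the paper's own route: apply Theorem \ref{mainthm} to get $sb[K]\leq 5$ for a two-bridge knot, use Lemma \ref{degreesb} for $d[K]\leq 10$, invoke Milnor's bound $d[K]\geq 6$, and exclude odd values by the parity observation preceding Lemma \ref{degreesb}. Nothing is missing.
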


In \cite{JJ2}, the authors use quadrisecants to
show that there are only finitely many 3-superbridge knots, all of them in the list $3_1, 4_1, 5_2, 6_1, 6_2, 6_3, 7_2,$ $7_3, 7_4, 8_4, 8_7$, and $8_9$. In this list, $3_1$ and $4_1$ are known to be
3-superbridge knots, and therefore knots of geometric degree 6.  Jeon and Jin conjecture that $3_1$ and $4_1$ are the only
3-superbridge knots. 

\begin{corollary} Every 2-bridge knot other than $3_1, 4_1, 5_2, 6_1, 6_2, 6_3, 7_2,$ $7_3, 7_4, 8_4, 8_7$, and $8_9$ has superbridge index 4 or 5.

\end{corollary}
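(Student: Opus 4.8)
The plan is to combine the upper bound from Theorem \ref{mainthm} with the finiteness result on 3-superbridge knots quoted from \cite{JJ2}. First I would observe that a 2-bridge knot has bridge index $b[K] = 2$ by definition, so Theorem \ref{mainthm} immediately yields $sb[K] \leq 3\cdot 2 - 1 = 5$. Combined with Kuiper's strict inequality $sb[K] > b[K]$ for any nontrivial knot (giving $sb[K] \geq 3$), this confines the superbridge index of every 2-bridge knot to the set $\{3,4,5\}$, which is exactly the fact already recorded in the text preceding the geometric-degree corollary.

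Next I would invoke the result of Jeon and Jin: the listed knots $3_1, 4_1, 5_2, 6_1, 6_2, 6_3, 7_2, 7_3, 7_4, 8_4, 8_7, 8_9$ form a \emph{complete} catalogue of all knots that can possibly have superbridge index $3$. The key logical move is to read this statement contrapositively, namely that any knot absent from the list must satisfy $sb[K] \neq 3$; since $sb[K] \geq 3$ for every nontrivial knot, this forces $sb[K] \geq 4$. Intersecting this lower bound with the upper bound $sb[K] \leq 5$ from the first step gives $sb[K] \in \{4,5\}$ for every 2-bridge knot outside the exceptional list, which is the claim.

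The deduction itself is short, so there is little to grind through; the one point deserving care is the logical orientation of the cited result. I would verify that \cite{JJ2} indeed establishes the twelve listed knots as the \emph{only} candidates for superbridge index $3$, rather than merely exhibiting them as examples, since only then is the contrapositive reading legitimate. I would also note that this candidate list applies to all knots and not just 2-bridge ones, so restricting to the 2-bridge case creates no compatibility issue: every 2-bridge knot is in particular a knot, and the listed exceptions are removed from consideration by the hypothesis of the corollary. Thus the main (and essentially only) obstacle is interpretive rather than computational.
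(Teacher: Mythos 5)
Your argument is correct and is exactly the reasoning the paper intends (the corollary is left without an explicit proof, but the surrounding text supplies both ingredients you use: the bound $3 \leq sb[K] \leq 5$ for 2-bridge knots from Theorem \ref{mainthm}, and the Jeon--Jin statement that \emph{all} 3-superbridge knots lie in the given list, which licenses your contrapositive step). Your caution about verifying that the cited list is exhaustive rather than illustrative is well placed, and the paper's own phrasing (``only finitely many 3-superbridge knots, all of them in the list'') confirms it.
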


\section{Upper Bound on Superbridge Index}

In this section, given a knot type $[K]$, we give a construction of a conformation that will be used to prove our main theorem: $sb[K] \leq 3b[K] -1.$

Let $\vec{v} \in \R^3$ be a unit vector with $\vec{v} = v_1\hat{i}+v_2\hat{j}+v_3\hat{k}$ and let $\eta(t) = (\cos  t, \sin  t, \cos^2  t).$ 

Kuiper was able to prove that the superbridge index of a knot, $sb[K]$, is bounded above by twice the braid index of that knot $\beta[K]$ by taking a braid conformation of the knot that follows the curve $\eta(t)$.  Each string  contributes either one or two local maxima given any direction defined by $\vec{v}$. We are adapting Kuiper's argument, but using a conformation of a knot that realizes bridge index (instead of braid index), and then placing it on the same curve $\eta(t)$ to show that $sb[K] \leq 3b[K] - 1$, where $b[K]$ is the bridge index of a knot K. 

Since we rely heavily on Kuiper's argument, we will summarize the argument here \cite{Kuip}:

Kuiper first notes that the curve $\eta(t)$ has at most two maxima in any direction. That is, it has superbridge number equal to 2. He then takes a circular \textit{r-braid} knot and parametrizes it for a small $\epsilon > 0$ by $$\lambda_{\epsilon}(t) = (cos (rt) * (1+\epsilon\lambda_1(t)), sin (rt) * (1+\epsilon\lambda_1(t)), cos^2 (rt) + \epsilon\lambda_2(t)), $$
in $t$ modulo $2\pi$, where $\lambda_1^2 + \lambda_2^2 \leq 1$. He approximates $\lambda_1(t)$ and $\lambda_2(t)$ by finite linear expressions in cos $n_j(t)$ and sin $n_j(t)$ for $n_j \in \mathbb{N}, j \in \mathbb{Z}$ 
so that we have finite polynomials in cos$(t)$ and sin$(t)$. This creates a conformation of a knot isotopic to the original \textit{r-braid} knot that lives inside a torus within the $\epsilon$-neighborhood of $\eta(t)$. Recalling cos$^2 (t) +$ sin$^2(t) = 1$, making the following substitutions: 
$$ cos(t) = \frac{2w}{1+w^2}, sin(t) = \frac {1-w^2}{1+w^2},$$
and then taking the derivative and setting the dot product with a unit vector equal to zero, Kuiper obtains an equation of the form 
$$A^{4r}(w)(1+w^2)^{N-2r} + \epsilon B^{2N}(w) = 0,$$ 
where $A^{4r}$ and $B^{2N}$ are polynomials in $w$ with degree $4r$ and $2N$. We note that when $\epsilon = 0$, there are $N-2r$ roots of $i$ and $N-2r$ of $-i$, and thus there are at most $4r$ real roots when $\epsilon = 0$. Continuity ensures that for small $\epsilon > 0$, the number of real roots will not increase, and thus for some conformation of the $\textit{r-braid}$ knot there are at most $2r$ local maxima (since every maximum must have a corresponding minimum). This leads to the conclusion that $sb[K] \leq 2\beta[K]$. 

For our purposes, we will need the following.

\begin{lemma} \label{twocritical}
Given any nonzero vector direction $\vec{v} = v_1\hat{i}+v_2\hat{j}+v_3\hat{k}$, over the interval $t \in (0, \pi/2)$, the curve $\eta(t) = (\cos  t, \sin  t, \cos^2  t)$ has at most two critical points when projected to the real line defined by $\vec{v}$.  
\end{lemma}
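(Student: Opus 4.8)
The plan is to analyze the height function directly rather than to invoke Kuiper's global count. Projecting $\eta$ onto the line spanned by $\vec v$ gives $f(t) = \vec v \cdot \eta(t) = v_1\cos t + v_2 \sin t + v_3 \cos^2 t$, and the critical points I must count are exactly the zeros of $f'(t) = v_2\cos t - v_1\sin t - 2v_3\sin t\cos t$ on $(0,\pi/2)$. Since $\cos t > 0$ throughout this open interval, I can divide by $\cos t$ without creating or destroying solutions, reducing the problem to counting solutions of the equivalent equation $v_1\tan t + 2v_3\sin t = v_2$.

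The key move is to isolate the structure with the substitution $x = \tan t$, which is a smooth bijection from $(0,\pi/2)$ onto $(0,\infty)$; using $\sin t = x/\sqrt{1+x^2}$ (valid because $\cos t > 0$), the equation becomes $G(x) = v_2$, where $G(x) = v_1 x + 2v_3\, x(1+x^2)^{-1/2}$. Counting critical points of $f$ thus amounts to counting preimages of $v_2$ under $G$. Here a single differentiation does all the work: $G'(x) = v_1 + 2v_3(1+x^2)^{-3/2}$. Because $(1+x^2)^{-3/2}$ is strictly decreasing on $(0,\infty)$, the function $G'$ is strictly monotone (or constant when $v_3 = 0$), so $G'$ vanishes at most once. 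Hence $G$ has at most one interior local extremum, is therefore at most two-to-one, and the equation $G(x)=v_2$ has at most two solutions, giving at most two critical points as claimed.

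The step I expect to be the real obstacle is attaining the sharp constant $2$ rather than a weaker bound. The naive route — clearing the radical by squaring $v_2 c = \sqrt{1-c^2}(v_1 + 2v_3 c)$ — produces a quartic and only caps the count at $4$; even a Rolle/second-derivative argument in the variable $c=\cos t$ stalls at $3$, since the cubic governing the sign of $f''$ can change sign twice on $(0,1)$. What rescues the sharp bound is isolating the square root before differentiating, so that the one derivative I take, $G'(x) = v_1 + 2v_3(1+x^2)^{-3/2}$, is manifestly monotone. Finally I would note that the degenerate directions are subsumed automatically: if $v_3 = 0$ then $G$ is linear and the equation has at most one solution, and if $v_1 = v_3 = 0$ then $G \equiv 0 \neq v_2$ has no solution, so in every case the number of critical points is at most two.
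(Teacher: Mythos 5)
Your proof is correct, and it takes a genuinely different and substantially cleaner route than the paper's. The paper also reduces to the equation $-v_1\sin t + v_2\cos t - 2v_3\sin t\cos t = 0$, but then substitutes $x=\sin t$, arrives at $g_1(x)=ax+(b+x)\sqrt{1-x^2}$, and is forced into exactly the trap you identified: squaring produces a quartic bounding the count by $4$, an auxiliary factor $g_2$ knocks it down to $3$, and a six-way case analysis on the signs of $a$ and $b$ (with tangent-line versus concave-graph arguments in the mixed-sign cases) is needed to reach $2$ on the relevant interval. Your substitution $x=\tan t$ avoids all of this: isolating the radical so that $G'(x)=v_1+2v_3(1+x^2)^{-3/2}$ is visibly strictly monotone (or constant) means $G'$ changes sign at most once, $G$ is piecewise strictly monotone with at most two pieces, and every level set has at most two points — one derivative and no cases. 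The only step worth stating a touch more explicitly is that a strictly monotone $G'$ that vanishes at $x_0$ necessarily changes sign there, so $G$ really is strictly monotone on each of $(0,x_0)$ and $(x_0,\infty)$; with that sentence added, your argument is a complete and preferable replacement for the paper's proof of Lemma \ref{twocritical}.
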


\begin{proof} Take the derivative $\eta ' = (- \sin t, \cos t, -2 \sin t \cos t)$. Critical points occur when $\eta ' \cdot \vec{v} = 0$, which is to say $$-v_1 \sin t + v_2 \cos t -2 v_3 \sin t \cos t = 0. $$

Note that when $v_3 = 0$, we are projecting to vectors in the $xy$-plane. Since $\eta$ projects to a circle in the plane,  there are exactly two critical points on opposite sides of the circle for any such vector $\vec{v}$,  and at most one critical point for $0 < t < \pi/2$.

When $v_3 \neq 0$, we  obtain:

$$\frac{v_1}{2v_3} \sin t + \frac{-v_2}{2v_3} \cos t +  \sin t \cos t = 0$$.

Let $ a = \frac{v_1}{2v_3}$ and $b = \frac{-v_2}{2v_3}$. Then we have $a \sin t + b\cos t + \sin t \cos t = 0$. When $0 < t < \pi /2$, we can let $x = \sin t$ and $\sqrt{1-x^2} = \cos t $ where $0 < x < 1$.

Restating the problem now, we would like to show that the function 
$g_1(x)  = a x + (b + x)\sqrt{1-x^2}$ has at most two zeros for $0 < x < 1$ for all possible choices of real numbers $a$ and $b$.
We consider various possibilities for $a$ and $b$.

\noindent Case 1. $a= 0$. Then $x = -b$ is the only zero, which may or may not be in the interval (0,1), depending on the value of $b$.

\noindent Case 2. $b = 0$. Then $a = -\sqrt{1 - x^2}$ and $x = \sqrt{1 - a^2}$ is the only potential zero in $(0,1)$, and appearing as a zero depending on the value of $a$.

We now assume both $a$ and $b$ are nonzero. Taking $g_1(x) = 0$, moving the $ax$ to the other side of the equation and squaring yields  $$a^2 x^2 = b^2 + 2bx + (1 - b^2) x^2 - 2 b x^3 - x^4. $$ Thus every zero of $g_1$ is also a zero of $$f = b^2 +2bx + (1 - a^2 - b^2) x^2 - 2bx^3 - x^4.$$ 
Therefore there are at most four zeros of $g_1$ over all values of $x$. Define $g_2 = -ax + (b+x) \sqrt{1 - x^2}$. Then $f = g_1 \cdot g_2$,so  any zeros of $g_2$ are also zeros of $f$. Also, since $a \neq 0$, the zeros of $g_2$ are distinct from the zeros of $g_1$.

However, $g_2(-1) = a$ and $g_2(1) = -a$. Thus, $g_2$ has at least one zero and therefore $g_1$ has at most three zeros over all values of $x$. 
We now continue to consider cases.

\noindent Case 3. $a > 0$ and $b> 0$. Then clearly for $ 0 < x < 1$, all terms in $g_1$ are positive and there are no zeros.

\noindent Case 4. $a < 0$ and $b< 0.$ Then $g_1(-1) > 0$ and $g_1(0) < 0$, so $g_1$ has a zero in the $x$-interval $(-1, 0).$ Therefore it can have at most two zeros remaining for the interval (0,1).

\noindent Case 5. $a < 0$ and $b > 0.$ Let $h_1 = |a|x$ and $h_2 = (b+x)\sqrt{1-x^2}$. Then a zero of $g_1$ satisfies $h_1 = h_2$.
But $h_2' = \frac{1 -2x^2 -bx}{\sqrt{1-x^2}}$  which yields critical points at $\frac{-b \pm \sqrt{b^2 + 8}}{4}$. So there is only one maximum for positive $x$, and $h_2'' < 0$. Further $h_2(0) = b > 0$ . So the ray of slope $|a|$ defined by $h_1$ can only cross the graph of $h_2$ once for $x > 0$, and we have at most one zero of $g_1$ in the x-interval (0,1).

\noindent Case 6. $a > 0$ and $b< 0$. Let Let $j_1 = -ax$ and $j_2 = (b+x)\sqrt{1-x^2}$. Then a zero of $g_1$ satisfies $j_1 = j_2$.
But $j_2' = \frac{1 -2x^2 -bx}{\sqrt{1-x^2}}$, and again critical points occur at $\frac{b \pm \sqrt{b^2 + 8}}{4}$. Since $b < 0$, only one critical point occurs for $x > 0$, which is a maximum and $j_2(0) = b < 0$. Also $j_2(1) = 0$, so the ray given by $j_1 = -ax$ can cross the graph of $h_2$ at most once, and $g_1$ has at most one zero for $0 < x < 1$.
\end{proof}

In order to prove Theorem \ref{mainthm}, we utilize $n$-plats. An $n$-plat is constructed from an open braid with $2n$ strings, by pairing off the adjacent endpoints, left to right on the top and then also on the bottom and then gluing simple arcs with one local maximum/minimum to each pair of endpoints. Every $n$-bridge knot has a representation as an $n$-plat obtained by taking an $n$-bridge presentation and stretching all of the local maxima up to the same level and stretching down all of the local minima to the same level, increasing the number of crossings as necessary.

\begin{lemma}\label{freestrand}Given an $n$-plat representation of a knot or link, we can always free one strand, while preserving the fact we have an $n$-plat. 
\end{lemma}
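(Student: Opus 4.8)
The plan is to interpret \emph{freeing a strand} concretely: I will isotope the given $n$-plat, through diagrams that remain $n$-plats, until one of the $2n$ strands of the underlying braid carries no crossings at all (so that it runs monotonically from its cap to its cup and can be pushed to the outside of the diagram). An outermost strand is the most promising candidate, so I would first try to free the rightmost strand, the one terminating at the rightmost cap over positions $(2n-1,2n)$ and the rightmost cup over $(2n-1,2n)$. By construction its only possible crossings are of type $\sigma_{2n-1}$, and it lies against the right boundary of the diagram, where there is nothing to obstruct it.

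First I would set up the two elementary moves that preserve the $n$-plat structure. (i) A Reidemeister~I untwisting at a cap or cup: if the crossing immediately below the rightmost cap is a $\sigma_{2n-1}$ (or immediately above the rightmost cup), then the two feet of that cap cross in a kink that can be removed, strictly lowering the number of crossings on the strand. (ii) Sliding a cap downward, or a cup upward, past a crossing that does not involve both of its feet; this is an isotopy and leaves the diagram an $n$-plat. The idea is then to process the $\sigma_{2n-1}$ crossings of the chosen strand from the top and from the bottom inward, using moves of type (ii) to bring a $\sigma_{2n-1}$ adjacent to the cap or cup and a move of type (i) to delete it, inducting on the number of $\sigma_{2n-1}$ crossings.

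The hard part will be the interaction with the neighboring $\sigma_{2n-2}$ crossings. A $\sigma_{2n-2}$ involves the inner foot, position $2n-1$, of the rightmost cap but not the outer strand in position $2n$, so it can sit between the cap and the first $\sigma_{2n-1}$ and block the naive untwisting; worse, two $\sigma_{2n-1}$ crossings separated by a $\sigma_{2n-2}$ can create a bigon on the outer strand that is threaded by an inner strand, so the outer strand cannot simply be pulled off to the right. The essential point I must establish is that such an obstruction is never permanent: either I show that the rightmost strand, having nothing to its right and endpoints free to travel along its cap and cup, bounds a disk meeting the rest of the diagram in arcs that can be combed away; or, when freeing the rightmost strand genuinely stalls, I re-choose which adjacent pairs are capped, a permissible plat move, so that the offending $\sigma_{2n-2}$ no longer lies against the cap. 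I expect this case analysis, organized by the type and position of the topmost and bottommost crossings on the chosen strand, to be the technical heart of the argument.

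Finally, I would record a monovariant guaranteeing termination: the number of crossings incident to the chosen strand, which each untwisting of type (i) strictly decreases while the slides of type (ii) merely reposition crossings to expose the next untwisting. After finitely many steps the chosen strand is crossing-free. Since every move used preserves the number of caps and the number of cups, the end result is still an $n$-plat with one strand freed, as claimed.
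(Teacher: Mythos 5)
There is a genuine gap here, and also an error in the setup. First the error: you assert that the rightmost strand ``by construction'' only participates in crossings of type $\sigma_{2n-1}$. That would be true of the rightmost \emph{column} of the braid, but a \emph{strand} is not confined to a column: the string entering the braid at position $2n$ can wander to position $1$ and back, picking up crossings of every type $\sigma_1,\dots,\sigma_{2n-1}$ along the way, and there is no reason the string leaving the rightmost cap should arrive at the rightmost cup at all. So the premise that the strand you want to free carries only $\sigma_{2n-1}$ crossings, to be peeled off one at a time by Reidemeister~I moves at the cap and cup, fails from the start. Second, and more importantly, the step you yourself flag as ``the technical heart'' --- showing that a bigon on the chosen strand threaded by an inner strand is never a permanent obstruction --- is exactly the content of the lemma, and it is not something your two local moves (untwisting at a cap, sliding a cap past a crossing) can resolve: a threaded bigon cannot be removed by any move supported in a neighborhood of the cap. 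Announcing that ``either I show the disk can be combed, or I re-choose the capping'' is a statement of hope, not an argument, and the re-capping alternative is not developed enough to check.

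For comparison, the paper resolves precisely this difficulty with one preliminary normalization and one global move. It takes the \emph{leftmost} string $s$ (starting at top position $1$), appends crossings at the bottom of the braid so that $s$ also ends at bottom position $1$; then every other string crosses $s$ an even number of times, so after pulling $s$ taut into a vertical strand the remaining intersections with $s$ organize into bigons lying to its left (nested ones are first folded back into un-nested ones). Each such bigon is then removed by lifting its arc up \emph{over the top of the entire plat} and down the other side --- a global isotopy of the arc around the outside of the diagram that preserves the $n$-plat structure. That over-the-top move is what disposes of the threaded-bigon obstruction, and it is the ingredient your proposal is missing. If you want to rescue your approach, you should (i) drop the claim about $\sigma_{2n-1}$-only crossings, (ii) add the rerouting step that makes the chosen strand start and end in the same extreme position so that all linking with other strands cancels in pairs, and (iii) introduce a move that carries arcs around the outside of the plat; at that point you will essentially have reproduced the paper's proof.
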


\begin{proof}
In the braid portion of the $n$-plat representation, the leftmost string $s$, which starts at the top in the first position, ends at the bottom in some position $i$.
We can add crossings at the  bottom of the braid in order to move the string back to the left so that it also ends in the first position, still preserving the fact we have an $n$-plat representation of the same link. For any other string that it crosses, it must do so an even number of times. Then pull $s$ taut, so it appears as a vertical strand. Although this may create many additional crossings, we still have an $n$-plat representation. If some of the resulting strands to the left of the taut string are nested, as in Figure \ref{freestrandfig} (a),  we can fold them back so that the only regions to the left of $s$ that remain are un-nested bigons.

\begin{figure}[htbp]
	\centering
  \includegraphics[scale=.5]{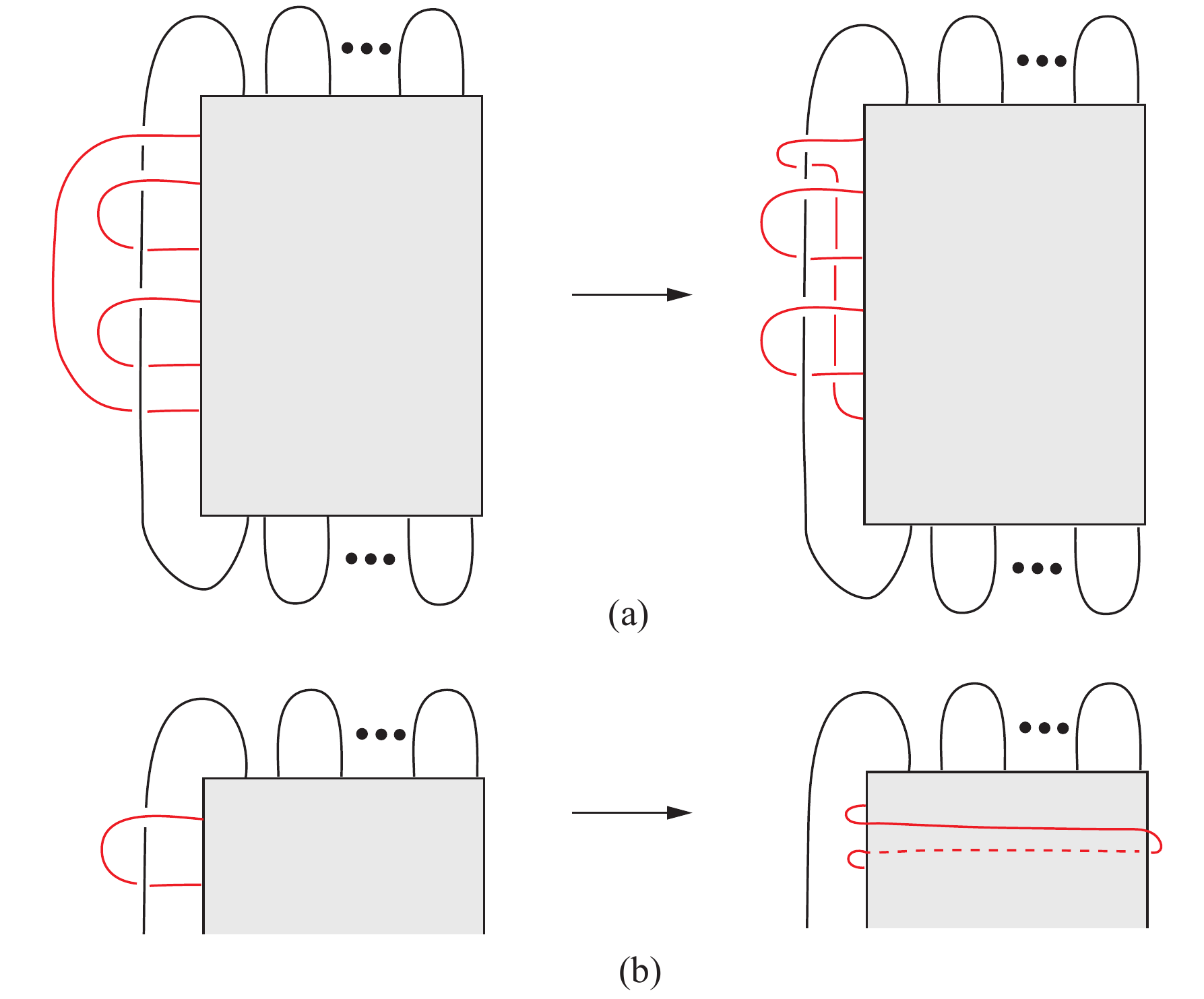}
    \caption{Freeing the leftmost strand in an $n$-plat.}
    \label{freestrandfig}
\end{figure}

Starting with the topmost such, we can lift the string making the bigon up over the top of the plat and down the other side, as in Figure \ref{freestrandfig}(b), removing the bigon while preserving the $n$-plat. Repeating with all bigons, we now have an $n$-plat presentation with a free strand on the left.
\end{proof}

\subsection*{The Construction}

Given any knot K with bridge index $n$, we begin with an $n$-plat projection $P$ in the $xy$-plane such that $P$ realizes the bridge index  of $K$, all local maxima occur at $y= 1$, all local minima occur at $y = 0$, and the strands travel from maxima to minima without inflection points with respect to the $y$-direction. By Lemma \ref{freestrand}, we free the leftmost strand. We call this strand $\textit{loose}$. Next we isotope our conformation such that it all lies within a small $\delta>0$ of the $xy$-plane, and such that the entire conformation other than the loose strand is confined to a $\delta$ neighborhood of the sub-arc of the curve $f(t)=(\cos(t), \sin(t))$ defined by $.1 < t < \pi/2 - .1$ The loose strand goes around the far side of the circle. See Figure \ref{baseballcurve1}.

We next proceed with a similar construction to \cite{FLS} by attaching two strands to each non-loose maximum such that the $i^{th}$ maximum is connected to the $i^{th}$ minimum with a point of singularity at the attachments. Furthermore, we can do this in such a way that the extra strands we have added wind around the $z$-axis with a height of zero in the $z$-direction and do not cross any other added strands or any part of the projection in the $xy$-plane and every added strand stays within a small $\epsilon$-neighborhood of the unit circle in the $xy$-plane defined by $f(t)=(\cos(t), \sin(t))$, as in the left side of Figure \ref{baseballcurve4}. We now have a $(2n-1)$-braid conformation of a singular knot $\hat{K}$ because the leftmost maximum of the original $n$-plat contributed one loose strand, and the other $n-1$ maxima each contribute two added strands. If we call the interior of the $2n-2$ added strands collectively $L$, then we note that $\hat{K}\backslash L$ is isotopic to our original knot $K$. As noted in \cite{FLS}, the singular points of attachment do not affect Kuiper's parameterization using functions $\lambda_1(t)$ and $\lambda_2(t)$ to find a conformation isotopic to $\hat{K}$. Since our conformation is currently within an $\epsilon$-neighborhood of the unit circle on the $x-y$ plane and within a small $\delta$ of the $x-y$ plane in the $z$-direction, we can change the $z$ coordinates to be within an $\epsilon$-neighborhood of the function $\cos^2(t)$, as shown on the right in Figure \ref{baseballcurve4}. Thus for $\epsilon > 0$, we have 
\begin{eqnarray*}
\hat{K}_\epsilon(t) &  = (\cos((2n-1)t)  
(1+\epsilon\lambda_1(t)), \sin((2n-1)t)(1+\epsilon\lambda_1(t)), \\  & 
\cos^2((2n-1)t)+\epsilon\lambda_2(t))
\end{eqnarray*}

where $\lambda_1^2 + \lambda_2^2 \leq 1$. We note that $\hat{K}_\epsilon(t)$ defines a $(2n-1)$-braid that sits within an $\epsilon$-wide tubular neighborhood of $\eta(t)$. Furthermore, the section of the curve containing the crossings associated with the knot $K$ lie in a region around $t=\pi/4$. This means most of the curve $\eta(t)$ is followed only by the original single loose strand and the added strands $L$. 

\begin{figure}[htbp]
	\centering
  \includegraphics[scale=.5]{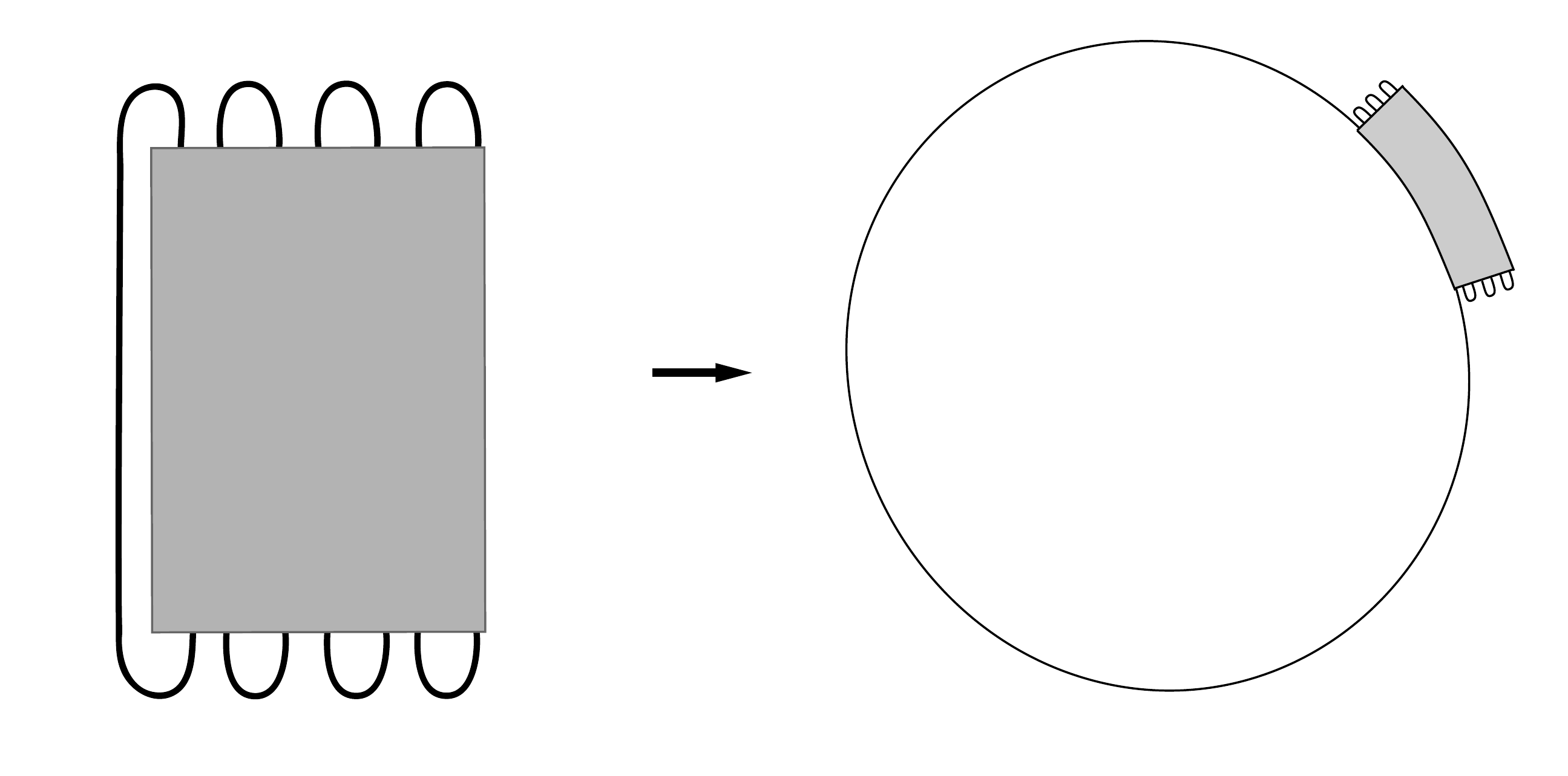}
    \caption{Istotoping an n-plat to within an $\epsilon$-neighborhood of the unit circle}
    \label{baseballcurve1}
\end{figure}

\begin{figure}[htbp]
	\centering
  \includegraphics[scale=.5]{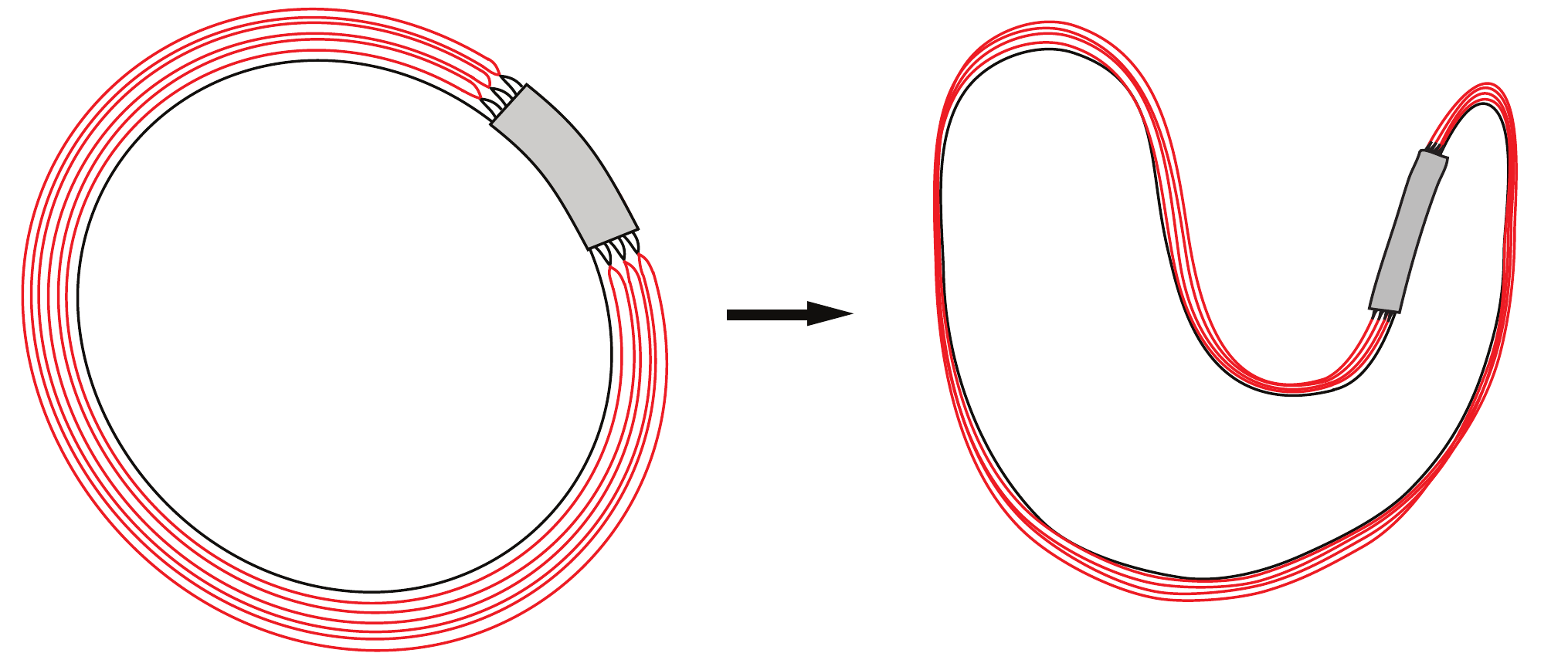}
    \caption{Adding the loose strands and placing on the curve $\eta$(t).}
    \label{baseballcurve4}
\end{figure}

\begin{proof} [Proof of Theorem \ref{mainthm}]

We will use $\hat{K}_\epsilon$ to obtain a bound on the superbridge number of our original knot $[K]$ by examining $sb(\hat{K}_\epsilon(t))$ and discounting the added strands (because the knot ends at the extrema where we added the extra strands). Let  $\vec{v} = v_1\hat{i}+v_2\hat{j}+v_3\hat{k}$ be a vector that defines the direction to which we project and let $J_\epsilon = \hat{K}_\epsilon \setminus L$. Let $E$ be the collection of $2n-2$ points on $J_\epsilon$ where the additional strands are attached, and call points on $J_\epsilon$ that are not in $E$ interior points.

We know that $\eta$ has at most two critical points in the direction of $\vec{v}$ in the arc defined by $ 0 < t < \pi/2$. When there are two critical points on the arc, since they are adjacent on the curve, at most one is a maximum. 

We can choose $\epsilon$ small enough that the variation in each strand due to the functions $\lambda_1$ and $\lambda_2$ is not greater than the curvature of the larger curve $\eta$. Thus, each string of the braid will have a critical point that is very close to the corresponding critical point on $\eta$.

For direction vectors that yield one maximum for $\eta$ and that maximum  is not in the interval $0 < t < \pi/2$, there are no maxima at the interior points of $J_\epsilon$ excluding the loose strand. However, if there is a minimum in the interval $0 < t < \pi/2$, then all of the points in $E$ can be maxima for $J_\epsilon$. In addition the loose strand has another maximum. So in this case, the total number of maxima can be at most $(2n-2) + 1= 2n -1$.

For direction vectors that yield two maxima for $\eta$, both of which are not in the arc of $\eta$ given by $0 < t < \pi/2$, we know that each of the $2n-2$ added strands and the one original loose strand contribute 2 local maxima. However, we can ignore the maxima contributed by all the $2n-2$ added strands, and instead view the $2n-2$ points in $E$ as the only other potential maxima. This leaves us with at most $2n-2 + 2 = 2n$ total maxima since the original loose strand contributes two maxima. 

For a direction vector that has a maximum in the arc on $\eta$ corresponding to the interval $0<t < \pi/2$, Lemma \ref{twocritical} limits us to at most two critical points. When there are two critical points, at most one can be a maximum.  If all the critical points on the individual strings occur in the region containing the $n$-plat, each of the $2n-1$ strands of the $n$-plat contributes one maximum. Each strand also contains a minimum, which means that the corresponding  $n-1$ singular points in $E$ where we glued on the additional strands will appear as maxima of the original knot. The loose strand will also have a potential maximum on it, so we have a total of $(2n-1) + (n-1) + 1 = 3n-1$ possible maxima.

If there is only one maximum and no minimum on the arc corresponding to $0 < t < \pi/2$, we have at most $(2n-2) + 1$ maxima, the last coming from the loose strand.

When critical points occur in the interval corresponding to $0 < t < \pi/2$ but do not necessarily correspond to critical points on $J_\epsilon$, we must be careful about the transition of critical points around the singular points in $E$. As we change our direction vector $\vec{v}$, we know that the region around each singular point will resemble Figure \ref{localmaxima}. Each string of the singular braid can contribute at most one local maximum in this region. As we vary the projection vector and see the two maxima corresponding to a pair of strands in $J_\epsilon$ that share that singular point moving to the singular point, we stop counting each at the instant the maximum coincides with the singularity. The first to pass through the singularity does not cause a change in the count of maxima for $J_\epsilon$ since it is replaced by the maximum at the singularity. The second maximum, when reaching the singularity disappears as a maximum for $J_\epsilon$. 
Thus, the total number of maxima does not go up when we transition maxima out of the interior of the strands of $J_\epsilon$. 

\end{proof}

\begin{figure}[htbp]
	\centering
  \includegraphics[scale=.3]{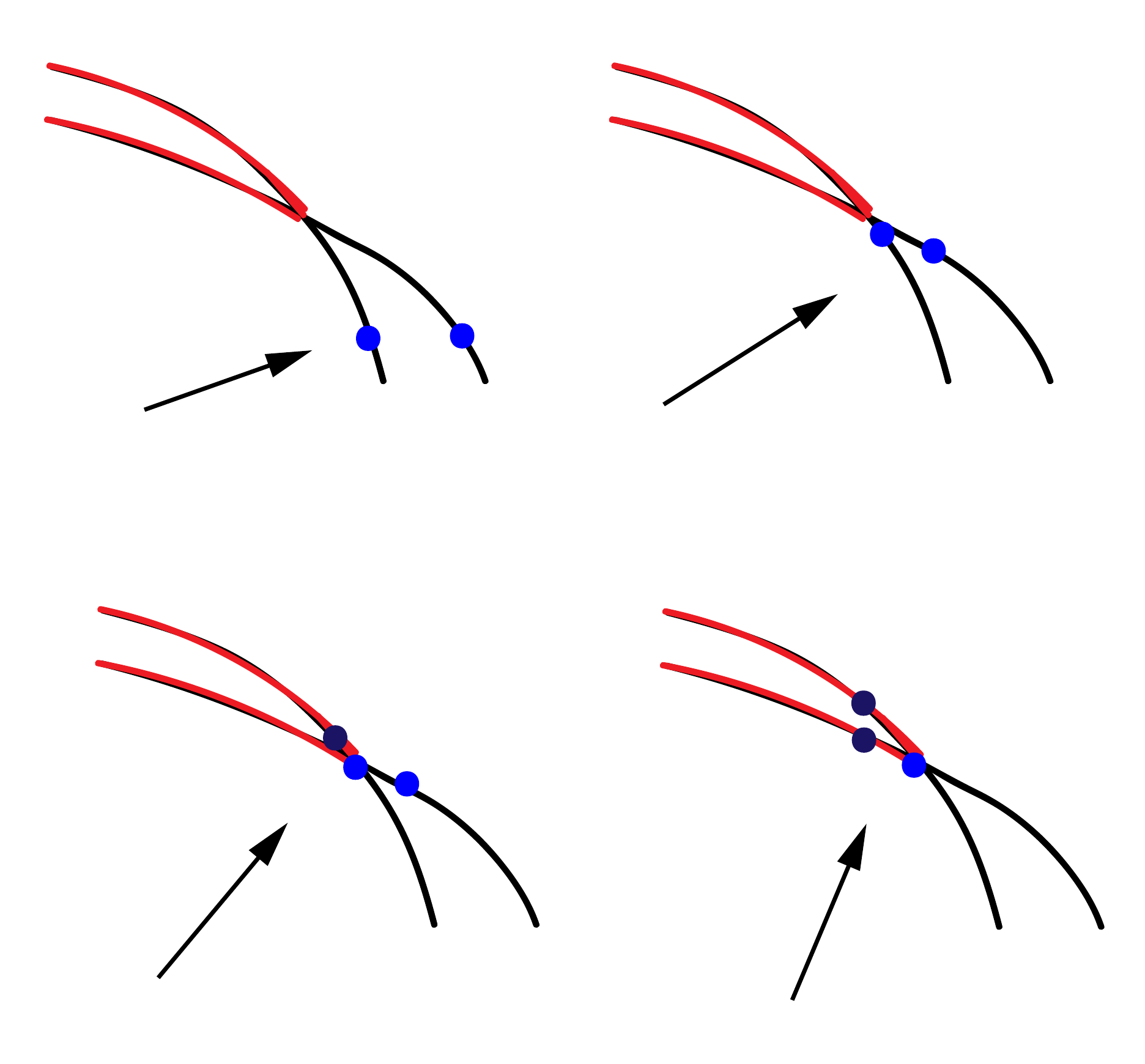}
    \caption{The local maxima at the singularity points as $\vec{v}$ rotates counterclockwise. Each strand contributes zero or one maxima.}
    \label{localmaxima}
\end{figure}

{\footnotesize COLIN ADAMS, DEPARTMENT OF MATHEMATICS, WILLIAMS COLLEGE, WILLIAMSTOWN, MA 01267 \\
{\it E-mail address:} {\bf cadams@williams.edu}

NIKHIL AGARWAL, DEPARTMENT OF ECONOMICS, MASSACHUSETTS INSTITUTE OF TECHNOLOGY, CAMBRIDGE, MA 02139 \\
{\it E-mail address:} {\bf agarwaln@mit.edu}

RACHEL ALLEN, CIVIL AND ENVIRONMENTAL ENGINEERING, UNIVERSITY OF CALIFORNIA, BERKELEY, CA 94720\\
{\it E-mail address:} {\bf rachelallen@berkeley.edu}

TIRASAN KHANDHAWIT, MAHIDOL UNIVERSITY, BANGKOK, THAILAND\\
{\it E-mail address:} {\bf tirasan@gmail.edu}

ALEX SIMONS, DEPARTMENT OF MATHEMATICS, WILLIAMS COLLEGE, WILLIAMSTOWN, MA 01267\\
{\it E-mail address:} {\bf ads4@williams.edu}

REBECCA WINARSKI, COLLEGE OF THE HOLY CROSS, 1 COLLEGE ST.,
WORCESTER, MA 01610\\
{\it E-mail address:} {\bf rwinarski@holycross.edu}

MARY WOOTTERS, DEPARTMENTS OF COMPUTER SCIENCE3 AND ELECTRICAL ENGINEERING, STANFORD UNIVERSITY, STANFORD, CA 94304\\
{\it E-mail address:} {\bf marykw@stanford.edu}}


\begin{thebibliography}{99}

\bibitem{FLS} E. Furstenberg, J. Lie, and J. Schneider, {\it Stick knots}, Chaos, Solitons, and
Fractals 9(4/5) (1998) 561-568.

 \bibitem{JJ1} C. B. Jeon, G.T. Jin, {\it A computation of superbridge index of knots} J. Knot Theory Ramifications, Vo.11 No. 3(2000) 461--473.
 
   \bibitem{JJ2} J. B. Jeon, G. T. Jin, {\it There are only finitely many 3-superbridge knots}, Knots in Hellas '98, Vol. 2 (Delphi). 
J. Knot Theory Ramifications 10 (2001), no. 2, 331-343. 


 \bibitem{Jin1} G. T. Jin, {\it Polygon indices and superbridge indices of torus knots and links} J. Knot Theory Ramifications 6 (1997), no. 2, 281-289.
 
 \bibitem{Jin2} G. T. Jin, {\it Superbridge index of composite knots}  J. Knot Theory Ramifications 9 (2000), no. 5, 669-682. 

\bibitem{Jin3}G. T. Jin, {\it  Superbridge index of knots} Kobe J. Math., Vo.18 No. 2(2001) 181--197.

\bibitem{Kuip} N. Kuiper, {\it A new knot invariant}, Math. Ann. 278 (1987) 193-209.

\bibitem{Milnor} J. Milnor, {\it On the total curvature of knots}, Annals of Math. 52 (1950) 248-257.

\bibitem{Nak} Y. Nakanishi, {\it Primeness of links} 
Math. Sem. Notes Kobe Univ. 9 (1981), no. 2, 415-440. 

\bibitem{Sch} H. Schubert, {\it Uber eine numerische knoteninvariante},
Math. Z. 61 (1954), 245-288. 



\end{thebibliography}
\end{document}